\documentclass[a4paper,12pt]{article}
\usepackage{times, url}
\usepackage{graphicx,amsmath,amssymb,amsthm,amsfonts,bm,float,cite}
\newtheorem{theorem}{Theorem}[section]
\newtheorem{corollary}{Corollary}[section]

\newtheorem{proposition}{Proposition}[section]

\newtheorem{example}{Example}[section]
\newtheorem{remark}{Remark}[section]

\title{On some classes of binary matrices}
\author{Krasimir Yordzhev}
\date{}

\begin{document}

\maketitle

Trakia University, Stara Zagora, Yambol, Bulgaria

Email address: krasimir.yordzhev@gmail.com

\begin{abstract} The work considers the set $\Lambda_n^k$ of all $n\times n$ binary matrices having the same number of $k$ units in each row and each column. The article specifically focuses on the matrices whose rows and columns are sorted lexicographically. We examine some particular cases and special properties of this matrices. Finally, we demonstrate the relationship between the Fibonacci numbers and the cardinality of two classes of $\Lambda_n^k$-matrices with lexicographically sorted rows and columns. 
\end{abstract}

{\bf Keywords:} Binary matrix, sorted lexicographically, decreasing order, increasing order, Fibonacci numbers. 

{\bf 2020 Mathematics Subject Classification:} 15B34, 11B39. 
\vspace{5mm}

\section{Preliminaries and notations} \label{sec:Intr}

 A \emph{binary} (or   \emph{boolean}, or (0,1)-\emph{matrix}) is a matrix whose all elements belong to the set $\mathfrak{B} =\{ 0,1 \}$. 
Let $m$  and $n$  be positive integers. With $\mathfrak{B}_{n\times m}$  we will denote the set of all $n\times m$  binary matrices and with  $\mathfrak{B}_n$  we will denote the set of all binary $n$-vectors.

If $n$ and $k$ are integers such that  $n\ge 2$, $0\le k\le n$, then we will call \emph{$\Lambda_n^k$-matrices} all $n\times n$ binary matrices in each row and each column of which there are exactly $k$ unity elements. 

The set $$\Lambda_n^k \subset {\mathfrak B}_{n\times n}$$ is the set of all $\Lambda_n^k$-matrices.

Let $A=(a_{ij} )\in \mathfrak{B}_{n\times m}$. With $r(A)$ we will denote the ordered  $n$-tuple
$$r(A) =\langle x_1, x_2 , \ldots ,x_n \rangle ,$$ 
where  $0\leq x_i \le 2^m -1$,   $i=1,2,,\ldots n$ and $x_i$  is the integer written in binary notation with the help of the $i$-th row of  $A$, i.e
$$x_i = \sum_{j=1}^{m} a_{ij} 2^{m-j} ,\quad i=1,2,,\ldots n .$$

Similarly with $c(A)$ we will denote the ordered $m$-tuple
$$c(A)=\langle y_{1} ,y_{2} ,\ldots ,y_{m} \rangle ,$$
where $0\le y_{j} \le 2^{n} -1$, $j=1,2,\ldots m$ and $y_{j} $ is a natural number written in binary notation with the help of the $j$-th column of $A$ i.e
$$y_j = \sum_{i=1}^{n} a_{ij} 2^{n-i} ,\quad j=1,2,,\ldots m .$$

 Let $A\in {\mathfrak B}_{n\times m}$,
$r(A)=\langle x_{1} ,x_{2} ,\ldots ,x_{n} \rangle$ and $c(A)=\langle y_{1} ,y_{2} ,\ldots ,y_{m} \rangle$. Then with $\mathfrak{C}_{n\times m}$ and with  $\mathfrak{D}_{n\times m}$  we will denote the sets: 
$$\mathfrak{C}_{n\times m} = \left\{ A\in \mathfrak{B}_{n\times m} \  \left| \  x_{1} \le x_{2} \le \cdots \le x_{n} \quad \mathrm{and} \quad y_{1} \le y_{2} \le \cdots \le y_{m} \right. \right\} \subset \mathfrak{B}_{n\times m} ,$$
$$\mathfrak{D}_{n\times m} = \left\{ A\in \mathfrak{B}_{n\times m} \  \left| \  x_{1} \ge x_{2} \ge \cdots \ge x_{n} \quad \mathrm{and} \quad y_{1} \ge y_{2} \ge \cdots \ge y_{m} \right. \right\} \subset \mathfrak{B}_{n\times m} .$$

In other words, $A\in \mathfrak{C}_{n\times m}$ if and only if  rows and columns of $A$ are sorted in lexicographical non decreasing order and $A\in \mathfrak{D}_{n\times m}$ if and only if  rows and columns of $A$ are sorted in lexicographical non increasing order.

We define the sets
$$\Gamma_n^k =\mathfrak{C}_{n\times n} \cap \Lambda_n^k ,$$
$$\Delta_n^k =\mathfrak{D}_{n\times n} \cap \Lambda_n^k$$
and the functions 
$$\gamma (n,k) =\left| \Gamma_n^k \right| ,$$
$$\delta (n,k) =\left| \Delta_n^k \right| .$$

As is well known (see for example\cite{atanassov} or \cite{koshy}), the sequence $\displaystyle \left\{ f_n \right\}_{n=0}^\infty $ of \emph{Fibonacci numbers} is defined by the recurrence relation
$$f_0 =f_1 =1, \qquad f_n = f_{n-1} +f_{n-2} \quad \textrm{for} \quad n=2,3,\ldots$$

\section{Some properties of the sets $\Gamma_n^k$ and $\Delta_n^k $.}

\begin{proposition}\label{ThThrm}
In general, 
$$
  \gamma (n,k) \ne \delta  (n,k)
$$
\end{proposition}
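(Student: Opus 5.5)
Since the statement only asserts that $\gamma$ and $\delta$ do not coincide identically, I would prove it with one explicit pair $(n,k)$ for which $\gamma(n,k)\neq\delta(n,k)$. The plan has two parts: first rule out the small cases where the two functions are forced to agree, then count both sets exactly for the smallest remaining case.

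\textbf{Step 1: reduce the search with a symmetry.} For $A\in\Lambda_n^k$, let $J$ be the all-ones matrix and consider $\bar A=J-A$. Complementing every bit sends a row value $x$ to $2^n-1-x$, and likewise for columns, so every lexicographic order is reversed. Hence $A\mapsto \bar A$ is a bijection $\Gamma_n^k\to\Delta_n^{n-k}$, which gives
$$\gamma(n,k)=\delta(n,n-k).$$
Consequences for small cases:
\begin{itemize}
\item If $n=2k$, then $\gamma(n,k)=\delta(n,k)$, so these cases are useless.
\item For $k=1$ (permutation matrices), sorting forces the anti-identity in $\Gamma_n^1$ and the identity in $\Delta_n^1$. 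So $\gamma(n,1)=\delta(n,1)=1$.
\item Together with the trivial $k=0$ and $k=n$, and the identity above, this settles $k=n-1$ as well.
\end{itemize}
So the first case that could possibly give a counterexample is $n=5$, $k=2$ (equivalently $k=3$).

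\textbf{Step 2: enumerate $\Delta_5^2$.} Monotonicity of rows and columns is very restrictive:
\begin{itemize}
\item In $\Delta_5^2$ the first column has the largest value, so it starts with a $1$.
\item The first row has the largest value, so it must be $11000$.
\item Next, branch on the second row, which must be $\le 11000$ and contain two ones. At each branch check the column-decreasing condition and the column sums equal to $2$.
\end{itemize}
This leaves a handful of matrices, which I would list explicitly.

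\textbf{Step 3: enumerate $\Gamma_5^2$.} Apply the same procedure:
\begin{itemize}
\item The last row has the largest value, and the last column has the largest value.
\item So the last row is $00011$, and one branches upward from there.
\end{itemize}

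\textbf{Main obstacle.} The risk is that $n=5$, $k=2$ happens to give equal counts. In that case I would move to $n=6$, $k=2$, or to $n=7$, and repeat the same exhaustive branching, possibly checked by computer. Running such a search and listing the matrices explicitly is the real content of the proof. The asymmetry between the two sets comes from the fact that bit significance is not preserved under reversing row and column order. Because of that I expect a discrepancy to appear quickly, but I cannot confirm its exact location without doing the enumeration. Once a pair with different counts is exhibited, the proposition follows immediately.
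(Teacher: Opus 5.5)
Your strategy is the paper's: exhibit one explicit pair at $n=5$ with $\gamma\neq\delta$. The paper uses $(n,k)=(5,3)$. It lists three $\Gamma_5^3$-matrices and five $\Delta_5^3$-matrices. Under your complementation bijection $\gamma(n,k)=\delta(n,n-k)$, this is exactly your first candidate with the values swapped: $\gamma(5,2)=5$ and $\delta(5,2)=3$. So the fallback to $n=6,7$ is never needed. Your Step 1 is correct and even adds something the paper does not state: $n=5$ is the smallest size at which a discrepancy can occur.

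However, the proof \emph{is} the two counts, and you leave both undone (``a handful of matrices, which I would list''; ``I cannot confirm''). Worse, the procedure you set up for $\Gamma_5^2$ would fail. The last row of a matrix in $\mathfrak{C}_{5\times 5}$ is its \emph{largest} row, so it cannot be $00011$, the smallest word with two ones. If it were, every row would equal $00011$, and columns $4$ and $5$ would each contain five ones. Branching upward from that anchor gives the empty set.

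The slip comes from the justification in your Step 2. Being the largest row does not force the first row of a $\Delta$-matrix to be $11000$. What forces it is the \emph{column} order: the first row supplies the most significant bit of every column, which is the paper's Proposition on the first row and column. Transferred correctly to $\Gamma_5^2$, this says the \emph{first} row and the \emph{first} column equal $00011$.

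With that anchor, rows $2,3$ begin with $0$ and rows $4,5$ with $1$. The lower-right $4\times 4$ block $S$ must then have row and column sums $(2,2,1,1)$. All remaining order conditions are automatic except four comparisons inside $S$: row $1\le$ row $2$, row $3\le$ row $4$, column $1\le$ column $2$, and column $3\le$ column $4$. Split according to the number $u\in\{0,1,2\}$ of ones in the lower-left $2\times 2$ block of $S$. The three cases give $1$, $1$ and $3$ matrices, so $\gamma(5,2)=5$.

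Your Step 2 branching gives $\delta(5,2)=3$. If $a_{22}=1$, the rest is the unique $\Delta_3^2$-matrix. If $a_{22}=0$, then $a_{23}=a_{32}=1$ are forced, and each choice of $a_{33}$ yields exactly one matrix. Writing out these two enumerations, with the corrected anchor for $\Gamma_5^2$, is what turns your plan into a proof.
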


\begin{proof}

According to \cite[Sequence A229162]{OEIS} and publication \cite{yordzhev3013}, some values of the integer sequence, obtained using a computer program are $$\displaystyle \left\{ \gamma (n,k)\right\}_{n=1}^{10} = A229162 =\left\{ 0, 0, 1, 1, 3, 25, 272, 4070, 79221, 1906501\right\} .$$ 

It is not difficult to see that all $\Gamma_5^3$-matrices are as follows:

$$\left(
          \begin{array}{ccccc}
            0 & 0 & 1 & 1 & 1 \\
            0 & 0 & 1 & 1 & 1 \\
            1 & 1 & 0 & 0 & 1 \\
            1 & 1 & 0 & 1 & 0 \\
            1 & 1 & 1 & 0 & 0 \\
          \end{array}
        \right),\qquad 
        \left(
          \begin{array}{ccccc}
            0 & 0 & 1 & 1 & 1 \\
            0 & 1 & 0 & 1 & 1 \\
            1 & 0 & 0 & 1 & 1 \\
            1 & 1 & 1 & 0 & 0 \\
            1 & 1 & 1 & 0 & 0 \\
          \end{array}
         \right) \quad \textrm{and} \quad
         \left(
          \begin{array}{ccccc}
            0 & 0 & 1 & 1 & 1 \\
            0 & 1 & 0 & 1 & 1 \\
            1 & 0 & 1 & 0 & 1 \\
            1 & 1 & 0 & 1 & 0 \\
            1 & 1 & 1 & 0 & 0 \\
          \end{array}
        \right)
 $$

According to \cite[Sequence A181344]{OEIS}  all $\Delta_5^3$-matrices are as follows:

$$ \left(
          \begin{array}{ccccc}
            1 & 1 & 1 & 0 & 0 \\
            1 & 1 & 0 & 1 & 0 \\
            1 & 1 & 0 & 0 & 1 \\
            0 & 0 & 1 & 1 & 1 \\
            0 & 0 & 1 & 1 & 1 \\
          \end{array}
        \right) ,\qquad 
         \left(
          \begin{array}{ccccc}
            1 & 1 & 1 & 0 & 0 \\
            1 & 1 & 1 & 0 & 0 \\
            1 & 0 & 0 & 1 & 1 \\
            0 & 1 & 0 & 1 & 1 \\
            0 & 0 & 1 & 1 & 1 \\
          \end{array}
         \right) , \qquad 
         \left(
          \begin{array}{ccccc}
            1 & 1 & 1 & 0 & 0 \\
            1 & 1 & 0 & 1 & 0 \\
            1 & 0 & 1 & 0 & 1 \\
            0 & 1 & 0 & 1 & 1 \\
            0 & 0 & 1 & 1 & 1 \\
          \end{array}
        \right) ,
 $$
 $$
         \left(
          \begin{array}{ccccc}
            1 & 1 & 1 & 0 & 0 \\
            1 & 1 & 0 & 1 & 0 \\
            1 & 0 & 0 & 1 & 1 \\
            0 & 1 & 1 & 0 & 1 \\
            0 & 0 & 1 & 1 & 1 \\
          \end{array}
         \right) \quad \textrm{and} \quad
         \left(
          \begin{array}{ccccc}
            1 & 1 & 1 & 0 & 0 \\
            1 & 0 & 0 & 1 & 1 \\
            1 & 0 & 0 & 1 & 1 \\
            0 & 1 & 1 & 1 & 0 \\
            0 & 1 & 1 & 0 & 1 \\
          \end{array}
        \right)
 $$

So $\gamma (5,3)=3$ and $\delta  (5,3)=5$, which proves Proposition \ref{ThThrm}.
\end{proof}

\begin{proposition}\label{Proposition1}
Let $A=(a_{ij} )\in {\mathfrak C}_{n\times m} $. Then there exist integers $s,t$, such that $1\le s\le m$, $1\le t\le n$ and
\begin{equation} \label{zGrindEQ__2_}
 a_{1 1} =a_{1 2} =\cdots =a_{1 s} =0,\quad a_{1, s+1} =a_{1, s+2} =\cdots =a_{1 m} =1, \end{equation}
\begin{equation} \label{zGrindEQ__3_}
a_{1 1} =a_{2 1} =\cdots =a_{t 1} =0,\quad a_{t+1, 1} =a_{t+2, 1} =\cdots =a_{n 1} =1. \end{equation}
\end{proposition}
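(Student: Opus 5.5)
The plan is to read off the first row from the ordering of the columns, and the first column from the ordering of the rows. The key observation is that the most significant binary digit of $y_j$ is $a_{1j}$, and that of $x_i$ is $a_{i1}$. A number with leading digit $1$ is always larger than one with leading digit $0$.

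\emph{Step 1: the first row.} Let $c(A)=\langle y_1,\ldots,y_m\rangle$. Suppose that for some $j$ we had $a_{1j}=1$ and $a_{1,j+1}=0$. Then
$$y_j\ge 2^{n-1}>\sum_{i=2}^{n}2^{n-i}\ge y_{j+1},$$
which contradicts $y_j\le y_{j+1}$. Hence the sequence $a_{11},a_{12},\ldots,a_{1m}$ never goes from $1$ to $0$. So it has the form $0,\ldots,0,1,\ldots,1$, which gives (\ref{zGrindEQ__2_}) with $s$ the number of leading zeros.

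\emph{Step 2: the first column.} The same argument applied to $r(A)=\langle x_1,\ldots,x_n\rangle$ works, since the leading digit of $x_i$ is $a_{i1}$. It shows that $a_{11},a_{21},\ldots,a_{n1}$ is also of the form zeros followed by ones, which gives (\ref{zGrindEQ__3_}).

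\emph{Step 3: $s\ge 1$ and $t\ge 1$, i.e.\ $a_{11}=0$.} This is the only delicate point, and I expect it to be the main obstacle. Suppose $a_{11}=1$. By Steps 1 and 2 the whole first row and first column consist of ones. In particular $x_1=2^m-1$, the largest possible value. Since $x_1\le x_2\le\cdots\le x_n\le 2^m-1$, every row is then all ones, so $A=J$ is the all-ones matrix. The all-ones matrix does lie in $\mathfrak{C}_{n\times m}$, and for it $a_{11}=1$. So the statement must be read with $A\ne J$, for example $A\in\Gamma_n^k$ with $k<n$, or with $s,t$ allowed to be $0$. Under that proviso the argument above forces $a_{11}=0$ and hence $s,t\ge 1$.

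I would state this exclusion explicitly in the proof. The cases $s=m$ and $t=n$ (first row or first column entirely zero) are allowed by the statement and need no further treatment.
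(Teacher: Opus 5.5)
Your proof is correct and is essentially the paper's argument: the leading binary digit of $y_j$ is $a_{1j}$ (and that of $x_i$ is $a_{i1}$), so a $1$ preceding a $0$ in the first row (column) would make an earlier column (row) strictly larger than a later one, contradicting the ordering. Your Step 3 is also right, and the paper's proof does not address it. The all-ones matrix lies in $\mathfrak{C}_{n\times m}$ and has $a_{11}=1$, so the bounds $s,t\ge 1$ fail for it and should be relaxed to $0\le s\le m$, $0\le t\le n$ (as in Corollary~\ref{Corollary1}); your observation that $a_{11}=1$ forces $A$ to be all ones shows this is the only exception.
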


\begin{proof}
Let $r(A)=\langle x_{1} ,x_{2} ,\ldots x_{n} \rangle $ and $c(A)=\langle y_{1} ,y_{2} ,\ldots y_{m} \rangle $. We assume that there exist integers $p$ and $q$, such that $1\le p<q\le m$, $a_{1 p} =1$ and $a_{1 q} =0$. In this case, $y_{p} > y_{q}$, which contradicts the condition that columns of $A$ are sorted in lexicographical non decreasing order. We have proven (\ref{zGrindEQ__2_}). Similarly, we prove (\ref{zGrindEQ__3_}) as well.

\end{proof}

\begin{corollary} \label{Corollary1}
Let $A=(a_{ij} )\in {\mathfrak C}_{n\times m} $. Then there exist integers $s,t$,  $0\le s\le m$, $0\le t\le n$, such that 
$$x_{1} =2^{s} -1 $$
and
$$ y_{1} =2^{t} -1 ,$$
where $s$ equals the number of units in the first row and $t$ equals the number of units in the first column of $A$.

\hfill $\square$
\end{corollary}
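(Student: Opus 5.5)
The plan is to read the corollary off Proposition \ref{Proposition1}, with one small adjustment of the indexing. Write $r(A)=\langle x_1,\ldots,x_n\rangle$ and $c(A)=\langle y_1,\ldots,y_m\rangle$. Proposition \ref{Proposition1} says that the first row of $A$ is a block of zeros followed by a block of ones, and likewise for the first column. The intended content is exactly that, with $s$ the length of the block of ones rather than the block of zeros; the proposition as written allows $1\le s\le m$ for the zero block, which excludes an all-ones first row. So I would first re-derive the statement directly, since the argument is one line. If $a_{1p}=1$ and $a_{1q}=0$ with $p<q$, then $y_p\ge 2^{n-1}>y_q$, because the leading bit of column $p$ is $1$ and that of column $q$ is $0$. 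This contradicts $y_p\le y_q$. Hence in the first row every $0$ precedes every $1$. The same argument applied to rows, using $x_1\le\cdots\le x_n$, shows that in the first column every $0$ precedes every $1$.

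Next, let $s$ be the number of units in the first row, so $0\le s\le m$. By the previous step these units occupy positions $m-s+1,\ldots,m$, and therefore
$$x_1=\sum_{j=m-s+1}^{m}2^{m-j}=\sum_{i=0}^{s-1}2^{i}=2^{s}-1 .$$
The boundary cases are consistent with this: $s=0$ gives the empty sum $0=2^0-1$, and $s=m$ gives $2^m-1$.

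Symmetrically, let $t$ be the number of units in the first column, with $0\le t\le n$. These units occupy rows $n-t+1,\ldots,n$, so $y_1=\sum_{i=n-t+1}^{n}2^{n-i}=2^{t}-1$. This gives both formulas together with the interpretation of $s$ and $t$.

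The only point needing care is the re-indexing noted above. The proposition's $s$ counts zeros and is stated with $1\le s\le m$, while the corollary's $s$ counts ones and allows $0\le s\le m$. I avoid any mismatch by rerunning the sortedness argument directly instead of substituting the proposition's parameters. With that done, the computation is just a geometric series.
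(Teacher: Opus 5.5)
Your proof is correct and follows the paper's route: the paper gives no separate proof, treating the corollary as immediate from Proposition~\ref{Proposition1}, and you rerun that proposition's leading-bit argument and then sum the geometric series over the last $s$ (resp.\ $t$) positions. Your remark about the range $1\le s\le m$ in Proposition~\ref{Proposition1} is also valid, since that range excludes $a_{11}=1$, a case realized by the all-ones matrix in $\mathfrak{C}_{n\times m}$; deriving the ordering directly sidesteps this.
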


\begin{proposition}\label{Proposition2} (Dual of Proposition \ref{Proposition1}) 
Let $A=(a_{ij} )\in {\mathfrak D}_{n\times m} $. Then there exist integers $s,t$, such that $1\le s\le m$, $1\le t\le n$ and
\begin{equation} \label{yyGrindEQ__2_}
 a_{1 1} =a_{1 2} =\cdots =a_{1 s} =1,\quad a_{1, s+1} =a_{1, s+2} =\cdots =a_{1 m} =0, \end{equation}
\begin{equation} \label{yyGrindEQ__3_}
a_{1 1} =a_{2 1} =\cdots =a_{t 1} =1,\quad a_{t+1, 1} =a_{t+2, 1} =\cdots =a_{n 1} =0. \end{equation}

\hfill $\square$
\end{proposition}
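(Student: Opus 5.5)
The argument mirrors the proof of Proposition \ref{Proposition1}, with every inequality reversed. Let $A=(a_{ij})\in\mathfrak{D}_{n\times m}$, with $r(A)=\langle x_1,\ldots,x_n\rangle$ and $c(A)=\langle y_1,\ldots,y_m\rangle$. To prove (\ref{yyGrindEQ__2_}) it suffices to show that the first row contains no $0$ followed later by a $1$. Once this is known, all units of the first row precede all zeros, and we take $s$ to be the number of units in that row.

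Suppose, to the contrary, that there are indices $1\le p<q\le m$ with $a_{1p}=0$ and $a_{1q}=1$. Compare the columns $p$ and $q$ as binary numbers. The first row carries the highest weight $2^{n-1}$. Column $q$ has a $1$ there and column $p$ has a $0$, so
$$y_q\ \ge\ 2^{n-1}\ >\ \sum_{i=2}^{n}2^{n-i}\ \ge\ y_p .$$
Thus $y_p<y_q$ with $p<q$, which contradicts $y_1\ge y_2\ge\cdots\ge y_m$.

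The statement (\ref{yyGrindEQ__3_}) follows by the same argument applied to the first column, using the row values $x_i$. If $a_{p1}=0$ and $a_{q1}=1$ for some $p<q$, then $x_q\ge 2^{m-1}>x_p$, contradicting $x_1\ge\cdots\ge x_n$. Hence $t$ is the number of units in the first column.

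There is no real obstacle beyond the weight comparison above, which holds because the leading bit dominates the sum of all lower bits. One caveat concerns the stated range $1\le s\le m$. If the first row is entirely $0$, the natural choice is $s=0$, exactly as in Corollary \ref{Corollary1}. I would therefore read the range as allowing $0\le s\le m$ and $0\le t\le n$, with the empty-block conventions understood. Alternatively, one can observe that in the matrices of interest, the $\Lambda_n^k$-matrices with $k\ge1$, the first row and column are nonzero, so $s,t\ge1$.

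An alternative route is a duality argument: complementing every entry maps $\mathfrak{D}_{n\times m}$ bijectively onto $\mathfrak{C}_{n\times m}$, since $x\mapsto 2^m-1-x$ reverses order. Applying Proposition \ref{Proposition1} to the complement then gives the result immediately. I would mention this as the conceptual reason the statement is called the dual.
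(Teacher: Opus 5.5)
Your proof is correct and is essentially the paper's: the paper gives no separate proof of this dual, and the intended argument is the mirror of the proof of Proposition~\ref{Proposition1}, which is what you wrote; your complement remark matches the duality the paper later uses in Theorem~\ref{ttrgamma_njk}. Your caveat about the range is also justified: in $\mathfrak{D}_{n\times m}$, a zero first row or zero first column forces $A$ to be the zero matrix, which is exactly the one case where $s,t\ge 1$ fails.
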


\begin{corollary} \label{Coroll2 } (Dual of Corollary \ref{Corollary1}) 
Let $A=(a_{ij} )\in {\mathfrak D}_{n\times m} $. Then there exist integers $s,t$,  $0\le s\le m$, $0\le t\le n$, such that 
$$x_{1} =(2^{s} -1)2^{m-s} = 2^m -2^{m-s}$$ 
and
$$y_{1} =(2^{t} -1)2^{n-t} = 2^n -2^{n-t} ,$$
where $s$ equals the number of units in the first row and $t$ equals the number of units in the first column of $A$.

\hfill $\square$
\end{corollary}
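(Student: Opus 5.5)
The statement to prove is Corollary~\ref{Coroll2 }, the dual of Corollary~\ref{Corollary1}. The plan is to read it off directly from Proposition~\ref{Proposition2}, with one preliminary adjustment: extend that proposition to allow $s=0$ and $t=0$. These cases cover a zero first row or column; this is why the corollary has $0\le s\le m$ and $0\le t\le n$.

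First I would establish the row structure directly. Suppose there are indices $1\le p<q\le m$ with $a_{1p}=0$ and $a_{1q}=1$. The columns $p$ and $q$ are compared by their integer values $y_p$ and $y_q$, in which the first row carries the highest weight $2^{n-1}$. Hence $y_q\ge 2^{n-1}>y_p$, because every other entry of column $p$ contributes at most $2^{n-1}-1$. This contradicts $y_p\ge y_q$, which holds since $A\in\mathfrak{D}_{n\times m}$. So the first row consists of a block of ones followed by a block of zeros: $a_{11}=\cdots=a_{1s}=1$ and $a_{1,s+1}=\cdots=a_{1m}=0$. Here $s$ is the number of ones in the row, and $s=0$ and $s=m$ are allowed.

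Next I would compute $x_1$ from this shape:
$$x_1=\sum_{j=1}^{s}2^{m-j}=2^{m-s}\left(2^{s-1}+\cdots+1\right)=(2^s-1)2^{m-s}=2^m-2^{m-s}.$$
The argument for the first column is symmetric: swap the roles of rows and columns and use $x_1\ge x_2\ge\cdots\ge x_n$. It gives $y_1=(2^t-1)2^{n-t}=2^n-2^{n-t}$, where $t$ is the number of ones in the first column.

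I expect no real obstacle. The only point needing care is the boundary cases $s=0$ or $t=0$ (a zero row or column), which the stated range of Proposition~\ref{Proposition2} excludes. They are handled by the direct argument above, and there both formulas give $0$, as they should.
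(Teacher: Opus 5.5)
Your proposal is correct and follows the paper's route. The paper reads the corollary off Proposition~\ref{Proposition2}, whose dual proof is your argument: a $0$ before a $1$ in the first row would force $y_p<y_q$, contradicting the column order. One then sums the geometric series for $x_1$, and symmetrically for $y_1$. Your separate treatment of $s=0$ or $t=0$ is a worthwhile addition. The stated ranges $1\le s\le m$ and $1\le t\le n$ in Proposition~\ref{Proposition2} do not cover a zero first row or column, while the corollary allows these cases.
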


\begin{theorem} \label{ttrgamma_njk}
Let $n$ and $k$, be integers such that $n\ge 1$, $0\le k$ and $k\le n$. Then
$$\gamma (n,n-k) = \delta  (n,k) .$$
\end{theorem}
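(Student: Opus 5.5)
The bijection I would use is complementation $A\mapsto \overline{A}=J-A$, where $J$ is the all-ones $n\times n$ matrix, so that $\overline{a}_{ij}=1-a_{ij}$. The claim is that complementation maps $\Gamma_n^{n-k}$ onto $\Delta_n^k$, which gives $\gamma(n,n-k)=\delta(n,k)$. The order of the rows and columns is not changed, so nothing in this construction involves permuting them.

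The first step concerns the row and column sums. If every row and every column of $A$ contains exactly $n-k$ units, then every row and every column of $\overline{A}$ contains exactly $n-(n-k)=k$ units. Hence $A\in\Lambda_n^{n-k}$ if and only if $\overline{A}\in\Lambda_n^{k}$.

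The second step, which is the only point needing care, concerns the lexicographic order. Write $r(A)=\langle x_1,\ldots,x_n\rangle$. The $i$-th row of $\overline{A}$ encodes the integer
$$\sum_{j=1}^n (1-a_{ij})2^{n-j}=2^n-1-x_i .$$
Similarly, the $j$-th column of $\overline{A}$ encodes $2^n-1-y_j$. The map $x\mapsto 2^n-1-x$ is strictly decreasing, so $x_1\le\cdots\le x_n$ holds if and only if the complemented row values are non-increasing, and the same holds for the columns. Therefore $A\in\mathfrak{C}_{n\times n}$ if and only if $\overline{A}\in\mathfrak{D}_{n\times n}$.

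Combining the two steps, $A\in\Gamma_n^{n-k}$ if and only if $\overline{A}\in\Delta_n^k$. Complementation is an involution, so it is a bijection between these two sets, and the equality of cardinalities follows. I expect no real obstacle; the main thing to check is that the ordering direction reverses, which is exactly the arithmetic identity $2^n-1-x_i$ above. The edge cases $k=0$ and $k=n$ are covered by the same argument, with the zero matrix and $J$ corresponding to each other.
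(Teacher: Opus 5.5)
Your proof is correct and matches the paper's argument. The paper also uses complementation $A\mapsto\overline{A}$, noting that it exchanges $\Lambda_n^{n-k}$ and $\Lambda_n^{k}$ and reverses the lexicographic order of rows and columns; your identity $2^n-1-x_i$ simply makes precise its remark that $u<v$ if and only if $\overline{u}>\overline{v}$.
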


\begin{proof}

Let $a\in \mathfrak{B} =\{ 0,1 \}$. Then with $\overline{a}$ we will denote 
$$ \overline{a} = \left\{ \begin{array}{ccc}
                          1 & \textrm{if} & a=0; \\
                          0 & \textrm{if} & a=1. 
                        \end{array}
                        \right.
 $$

Obviously $\overline{\overline{a}} =a$.

If $u=\langle u_1 ,u_2 ,\ldots ,u_n \rangle \in\mathfrak{B}_n$ then with $\overline{u}$ we will denote $\overline{u} =\langle \overline{u}_1 ,\overline{u}_2 ,\ldots ,\overline{u}_n \rangle $. If $A=(a_{ij} )\in {\mathfrak B}_{n\times m} $ then with $\overline{A}$ we will denote $\overline{A}=(\overline{a}_{ij} )$.

Let  $u =\langle u_{1} ,u_{2} ,\ldots ,u_{n} \rangle $, $v =\langle v_{1} ,v_{2} ,\ldots ,v_{n} \rangle \in \mathfrak{B}_n$. Then it is easy to see that $u < v$ if and only if  $\overline{u} > \overline{v}$.
Therefore a matrix  $A=(a_{ij} )\in \mathfrak{C}_{n\times m} $  if and only if the matrix  $\overline{A}=(\overline{a}_{ij} )\in \mathfrak{D}_{n\times m} $.

Finally, we take into account the fact that the matrix $A=\left( a_{ij}\right) \in \Lambda_n^{n-k}$  if and only if the matrix $\overline{A}=\left( \overline{a}_{ij}\right) \in \Lambda_n^{k}$.

\end{proof}

\begin{theorem}\label{partitofM}
Let $n$ be an integer, $n\ge2$ and let $\displaystyle A=\left( a_{ij} \right) \in \Delta_n^2 \subset \Lambda_n^2$. Then $A$ has the form:
\begin{equation}\label{onestarA}
A=
\left(
  \begin{array}{cccccccc}
      &   &   &   & 0 & 0 & \cdots & 0 \\
      & B &   &   & 0 & 0 & \cdots & 0 \\
      &   &   &   & \vdots & \vdots & \ddots  & \vdots \\
      &   &   &   & 0 & 0 & \cdots & 0 \\
    0 & 0 & \cdots & 0 &   &   &   &   \\
    0 & 0 & \cdots & 0 &   & C  &   &   \\
    \vdots & \vdots & \ddots  & \vdots &   &  &   &   \\
    0 & 0 & \cdots & 0 &   &   &   &   \\
  \end{array}
\right) ,  
\end{equation}
where $B$ and $C$ are square binary matrices,
\begin{equation}\label{B-zero}
B= \left(
     \begin{array}{cc}
       1 & 1 \\
       1 & 1 \\
     \end{array}
   \right) ,
\end{equation}
or $B$  has the form:
\begin{equation}\label{twostarts2}
B=
\left(
  \begin{array}{cccccccc}
    1 & 1 & 0 & 0 & 0 & \cdots & 0 & 0 \\
    1 & 0 & 1 & 0 & 0 & \cdots & 0 & 0 \\
    0 & 1 & 0 & 1 & 0 & \cdots & 0 & 0 \\
    0 & 0 & 1 & 0 & 1 &\cdots & 0 & 0 \\
    \vdots & \vdots & \vdots & \ddots & \ddots & \ddots & \vdots & \vdots \\
    0 & 0 & 0 & \cdots & 1 & 0 & 1 & 0 \\
    0 & 0 & 0 & \cdots & 0 & 1 & 0 & 1 \\
    0 & 0 & 0 & \cdots & 0 & 0 & 1 & 1 \\
  \end{array}
\right) .
\end{equation}
$C\in \Delta_s^2 \subset \Lambda_s^2$ for some $s$ such that $2\le s \le n-2$, or $C$ does not exist. The remaining elements of matrix $A$, which are outside submatrices $B$ and $C$, are equal to 0.

\end{theorem}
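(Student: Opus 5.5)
We will describe the structure of $A\in\Delta_n^2$ by reading its rows and columns one at a time from the top-left corner, and then recurse on what is left.

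\emph{Step 1: the first row and column.} By Proposition \ref{Proposition2} with $k=2$, the first row is $1\,1\,0\cdots0$ and the first column is $(1,1,0,\ldots,0)^T$. So $a_{11}=a_{12}=a_{21}=1$, and every other entry of the first row and first column is $0$.

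\emph{Step 2: the second row.} It already has $a_{21}=1$ and needs exactly one more unit.
\begin{itemize}
\item If $a_{22}=1$, then row 2 equals row 1. Column 2 then has its two units in rows 1 and 2, so column 2 also equals column 1. The block of rows $1,2$ and columns $1,2$ is then full, and every other entry in these rows and columns is $0$. This gives $B$ as in (\ref{B-zero}) with $m=2$.
\item If $a_{22}=0$, the second unit of row 2 lies in some column $j\ge3$. Column 2 has $a_{12}=1$ and needs one more unit, which lies in some row $i\ge3$ (since $a_{22}=0$). The non-increasing lexicographic order of columns $3,\ldots,n$ forces the column with a unit in row 2 to come first among them, so $j=3$. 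Symmetrically, the order of rows $3,\ldots,n$ gives $i=3$.
\end{itemize}

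\emph{Step 3: the chain (main step).} We continue the second case by induction. Suppose rows $1,\ldots,p$ and columns $1,\ldots,p$ have been shown to match the pattern (\ref{twostarts2}) truncated at size $p$. Then row $p$ and column $p$ each still need one more unit, which we call the \emph{pending} units. Every other row $\ge p+1$ and column $\ge p+1$ is still all zero in its first $p$ positions.

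We claim the pending unit of column $p$ lies in row $p+1$. The row containing it is, at that point, the largest among rows $\ge p+1$, because its first $p$ entries contain a 1 while the others' first $p$ entries are zero. The decreasing order of rows then puts it at index $p+1$. Symmetrically, the pending unit of row $p$ lies in column $p+1$.

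Now consider $a_{p+1,p+1}$.
\begin{itemize}
\item If $a_{p+1,p+1}=1$, then row $p+1$ and column $p+1$ are complete, with units at positions $p$ and $p+1$. The block closes as the bottom-right corner of (\ref{twostarts2}), with $m=p+1$, and all entries outside it in these rows and columns are $0$.
\item If $a_{p+1,p+1}=0$, we continue to the next step exactly as in Step 2.
\end{itemize}

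This step is the main obstacle. We must check carefully that lexicographic comparison really is decided by the earliest columns, and we must exclude degenerate ties. In particular, if the pending unit of column $p$ and the pending unit of row $p$ were both forced into the same row, we would need to rule out an inconsistency; this does not happen because each line contains exactly $2$ units.

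\emph{Step 4: finiteness and recursion.} The chain process terminates because $n$ is finite, and it must close with a corner unit as above. It cannot run out of room, since at the last index the pending unit is forced onto the diagonal.

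Once $B$ of size $m$ closes, all its rows and columns are saturated. Hence the blocks of rows $1,\ldots,m$ by columns $m+1,\ldots,n$, and of rows $m+1,\ldots,n$ by columns $1,\ldots,m$, are zero.

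The remaining submatrix $C$ on indices $m+1,\ldots,n$ has exactly $2$ units in every row and column. Its rows and columns are still sorted non-increasingly, because the prefixes outside $C$ are all zero, so comparisons are decided within $C$. Thus $C\in\Delta_{n-m}^2$. Since $\Lambda_1^2$ is empty, either $n-m\ge2$ or $C$ does not exist; as $m\ge2$, we get $2\le n-m\le n-2$.
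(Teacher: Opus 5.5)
Your proposal is correct and follows the same route as the paper. Proposition~\ref{Proposition2} fixes the first row and column, and a case split on $a_{22}$ either closes the $2\times 2$ block or forces $a_{23}=a_{32}=1$ through the lexicographic ordering (the paper's $y_3<y_t$ contradiction). Iterating along the diagonal until some $a_{tt}=1$ then yields the chain block, and the complementary block $C$ inherits membership in $\Delta^2_{n-m}$. Your write-up mainly makes explicit the induction invariant and the check that $C$ stays sorted, both of which the paper leaves as ``easy to see''. The inconsistency worry you raise in Step~3 is vacuous, since the two pending units sit at the distinct positions $(p,p+1)$ and $(p+1,p)$.
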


\begin{proof}
Such that $A\in \Lambda_n^2$ and from Proposition \ref{Proposition2} it follows that $a_{11} =a_{12} =a_{21} =1$, $a_{i1} =0$  and $a_{1j} =0$ for $3\le i,j\le n$.

i) If $a_{22} =1$ then $B$ has the form (\ref{B-zero}). If the matrix $C$ exists, then it is easy to see that $C\in \Delta_{n-2}^{2} \subset \Lambda_{n-2}^{2}$ matrix.

ii) Let $a_{22} =0$, i.e. $A$ be of the form $$ \left( \begin{array}{ccccc}
                                                                1 & 1 & 0 & \cdots & 0 \\
                                                                1 & 0 & a_{23} &        &  \\
                                                                0 & a_{32} & &  &  \\
                                                                \vdots & & &  &  \\
                                                                0 & & &  &  \\
                                                              \end{array}
                                                            \right) .$$ 

Let $c(A)=\langle y_{1} ,y_{2} ,\ldots ,y_{m} \rangle$. We suppose that $a_{23} =0$.  Since $A\in \Lambda_n^{2}$ there exists an integer $t$ such that $3<t\le n$ and $a_{2t} =1$. In this case, it is easy to see that $y_3 <y_t$, which is impossible because $A\in {\mathfrak D}_{n\times n} $. Therefore $a_{23}=1$. Similarly $a_{32}=1$. Therefore, when $a_{22}=0$, $A$ is represented as $A= \left(
                                                                                                                                          \begin{array}{cccc}
                                                                                                                                            1 & 1 & 0 & \cdots \\
                                                                                                                                            1 & 0 & 1 &        \\
                                                                                                                                            0 & 1 & a_{33} &   \\
                                                                                                                                            \vdots &  &    &   \\   
                                                                                                                                          \end{array}
                                                                                                                                        \right)$.
We consider again the two possible cases for $a_{33}$: $a_{33} =1$ or $a_{33} =0$. When $a_{33} =1$, the statement is proved. When $a_{33} =0$, we do the same reasoning as above. This process cannot continue indefinitely, since $n$ is a finite integer. Therefore, there exists an integer $t$, $2\le t\le n$ such that $a_{t\, t} =a_{t-1\, t} =a_{t\, t-1} =1$, i.e. in the upper left corner of A there is a submatrix of the form (\ref{twostarts2}). And in this case, it is easy to see that if the matrix $C$ exists, then $C\in \Delta_s^2 \subset \Lambda_s^{2}$ for some $s$ such that $2\le s \le n-2$.

\end{proof}

\begin{corollary} \label{remrkbss}
Let $n$ be an integer, $n\ge 2$.Then 
$$\delta (n,2)=\gamma (n,n-2)=\emph{number of all ordered $s$-tuples of integers} $$
$$\langle p_1 ,p_2 ,\ldots ,p_s \rangle , \quad 1\le s\le \left[ \frac{n}{2} \right],$$
such that $2\le p_i \le n$, $i=1,2,\ldots s$ and 
$$p_1 +p_2 +\cdots +p_s =n .$$

\end{corollary}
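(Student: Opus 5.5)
The first equality $\delta(n,2)=\gamma(n,n-2)$ is Theorem \ref{ttrgamma_njk} with $k=2$. So the real work is a bijection between $\Delta_n^2$ and compositions of $n$ into parts $p_i\ge 2$. The number of parts is automatically at most $[n/2]$.

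To build the map, I would iterate Theorem \ref{partitofM}. Any $A\in\Delta_n^2$ has the block form (\ref{onestarA}). The upper-left block $B$ has some size $p_1\ge 2$: it is either the $2\times 2$ all-ones matrix (\ref{B-zero}), so $p_1=2$, or the band matrix (\ref{twostarts2}) of size $p_1\ge 3$. The block $C$ is either absent or lies in $\Delta_{n-p_1}^2$. Applying the theorem again to $C$, and inducting on $n$, gives the sequence of block sizes $\langle p_1,\ldots,p_s\rangle$ with each $p_i\ge 2$ and $\sum p_i=n$. So $A$ is block diagonal,
$$A=\mathrm{diag}(B_{p_1},\ldots,B_{p_s}),$$
where $B_p$ denotes the unique matrix of form (\ref{B-zero}) when $p=2$ and of form (\ref{twostarts2}) when $p\ge 3$. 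The map $A\mapsto\langle p_1,\ldots,p_s\rangle$ is injective, because the block sizes determine every block and every entry outside the blocks is $0$.

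Surjectivity is where I expect the main obstacle, since Theorem \ref{partitofM} only gives a necessary form. I must check that for every composition with parts $\ge 2$, the matrix $\mathrm{diag}(B_{p_1},\ldots,B_{p_s})$ really lies in $\Delta_n^2$.

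Membership in $\Lambda_n^2$ is clear: each $B_p$ has exactly two ones in every row and column.

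For the row ordering, I would compare the leftmost 1 of each row. Within a block $B_p$, row $1$ starts at column $1$, row $2$ at column $1$, and row $i\ge 3$ at column $i-1$. So the starting positions are non-decreasing. When two rows start at the same column (rows $1$ and $2$), compare them directly: $110\ldots \ge 101\ldots$. Across blocks, every row of a later block starts strictly to the right of every row of an earlier block. Hence $x_1\ge x_2\ge\cdots\ge x_n$.

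For the columns, the matrix $B_p$ is symmetric, so the block-diagonal matrix is symmetric. Therefore $c(A)=r(A)$, and the columns are non-increasing as well.

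Putting these together, the map is a bijection onto the set of ordered tuples described in the statement, which proves the corollary.
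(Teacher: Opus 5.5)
Your proof is correct and follows essentially the paper's route. The paper states this corollary without a separate proof, as an immediate consequence of Theorem \ref{ttrgamma_njk} with $k=2$ and of iterating the block decomposition of Theorem \ref{partitofM} on $C$. You also verify the converse direction, that every $\mathrm{diag}(B_{p_1},\ldots,B_{p_s})$ lies in $\Delta_n^2$ (via the leftmost-one comparison and symmetry); the paper leaves this implicit, only saying ``it is easy to see'' in the analogous step of Theorem \ref{thtiorDelta_n_2}.
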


\begin{remark} 
Corollary \ref{remrkbss} can be used as a basis for the proof of well known equation formulated and proven in \cite{6}. 
$$\left| \Lambda_n^2 \right| =\left| \Lambda_n^{n-2} \right| = \sum_{2p_2 +3p_3 + \cdots +np_n =n} \frac{(n!)^2}{\displaystyle \prod_{r=2}^n p_r !(2r)^{x_r}} .$$
\end{remark}

\begin{remark}
A similar theorem can be formulated and proved for the set  $\Gamma_{n}^{n-2} \subset \Lambda_{n}^{n-2}$, $n\ge 2$. 
\end{remark}

\begin{example}\label{ExampleEx3.1}
\ 

i) There is only one $\Delta_2^{2}$ matrix: $$\left(
                                                                                              \begin{array}{cc}
                                                                                                1 & 1 \\
                                                                                                1 & 1 \\
                                                                                              \end{array}
                                                                                            \right) .$$

ii) There is only one $\Delta_3^{2}$ matrix: $$\left(
                                                                                             \begin{array}{ccc}
                                                                                               1 & 1 & 0 \\
                                                                                               1 & 0 & 1 \\
                                                                                               0 & 1 & 1 \\
                                                                                             \end{array}
                                                                                           \right) .$$

iii) There are two $\Delta_4^{2}$matrices: $$\left(
                                                                                          \begin{array}{cccc}
                                                                                            1 & 1 & 0 & 0 \\
                                                                                            1 & 1 & 0 & 0 \\
                                                                                            0 & 0 & 1 & 1 \\
                                                                                            0 & 0 & 1 & 1 \\
                                                                                          \end{array}
                                                                                        \right) \textrm{and} \left(
                                                                                          \begin{array}{cccc}
                                                                                            1 & 1 & 0 & 0 \\
                                                                                            1 & 0 & 1 & 0 \\
                                                                                            0 & 1 & 0 & 1 \\
                                                                                            0 & 0 & 1 & 1 \\
                                                                                          \end{array}
                                                                                        \right) .$$
                                                                                        
iv)  There are three $\Delta_5^2$ matrices:

$$ \left(
          \begin{array}{ccccc}
            1 & 1 & 0 & 0 & 0 \\
            1 & 1 & 0 & 0 & 0 \\
            0 & 0 & 1 & 1 & 0 \\
            0 & 0 & 1 & 0 & 1 \\
            0 & 0 & 0 & 1 & 1 \\
          \end{array}
        \right),\quad 
         \left(
          \begin{array}{ccccc}
            1 & 1 & 0 & 0 & 0 \\
            1 & 0 & 1 & 0 & 0 \\
            0 & 1 & 1 & 0 & 0 \\
            0 & 0 & 0 & 1 & 1 \\
            0 & 0 & 0 & 1 & 1 \\
          \end{array}
         \right) \quad \textrm{and} \quad
         \left(
          \begin{array}{ccccc}
            1 & 1 & 0 & 0 & 0 \\
            1 & 0 & 1 & 0 & 0 \\
            0 & 1 & 0 & 1 & 0 \\
            0 & 0 & 1 & 0 & 1 \\
            0 & 0 & 0 & 1 & 1 \\
          \end{array}
        \right)
 $$

\end{example}

\section{$\Delta_n^k$ and $\Gamma_n^k$ matrices in relation to the  Fibonacci numbers}

\begin{theorem} \label{thtiorDelta_n_2}
Let $n$ be a nonnegative integer. Then
\begin{equation}\label{Fobdelta}
f_n =\delta (n+2,2) ,
\end{equation}
where $f_n$ is the $n$-th element of the Fibonacci sequence.
\end{theorem}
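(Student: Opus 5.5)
We prove the claim by strong induction on $n$, using the block decomposition of Theorem \ref{partitofM}, or equivalently the counting description of Corollary \ref{remrkbss}. Write $d_m=\delta(m,2)$ for $m\ge 2$. By Theorem \ref{partitofM}, every $A\in\Delta_m^2$ is determined by two pieces of data: the size $p\in\{2,\dots,m\}$ of its upper-left block $B$, and the matrix $C\in\Delta_{m-p}^2$ when $p<m$. For $p=2$ the block $B$ is the all-ones $2\times 2$ matrix (\ref{B-zero}); for $p\ge 3$ it is the unique "chain" matrix (\ref{twostarts2}).

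Conversely, every such choice gives a matrix in $\Delta_m^2$. The only case that needs care is $p=m-1$, which would leave a $1\times 1$ block $C$; no matrix in $\Lambda_1^2$ exists, so this case contributes nothing. We also check that the block-diagonal matrix $\mathrm{diag}(B,C)$ really has lexicographically non-increasing rows and columns. Every row and column of $B$ begins with a prefix carrying the ones of $B$, while the rows and columns through $C$ have zeros in those positions. So all rows through $B$ precede all rows through $C$. Within each block the order is inherited: for $C$ by hypothesis, and for $B$ by direct inspection of (\ref{twostarts2}), whose rows read $110\cdots$, $101\cdots$, $0101\cdots$, and so on, a decreasing sequence. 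Finally, the decomposition is unique, because $p$ is read off as the first index $t$ with $a_{tt}=a_{t-1,t}=1$, as in the proof of Theorem \ref{partitofM}.

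This gives the recurrence
$$d_m=\sum_{p=2}^{m} d_{m-p}, \qquad d_0:=1,\quad d_1:=0,$$
where $d_0=1$ accounts for the case where $C$ does not exist. Equivalently, $d_m$ counts compositions of $m$ into parts $\ge 2$, which is exactly Corollary \ref{remrkbss}. Subtracting the recurrence for $d_{m-1}$ from that for $d_m$ gives $d_m-d_{m-1}=d_{m-2}$ for $m\ge 3$. The standard bijection says the same thing: if the last part of a composition equals $2$, delete it; otherwise decrease the last part by one. This gives $d_m=d_{m-1}+d_{m-2}$ for $m\ge 4$.

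The base cases come from Example \ref{ExampleEx3.1}: $d_2=1=f_0$ and $d_3=1=f_1$, and $d_4=2=f_2$ serves as a consistency check. Induction then yields $d_{n+2}=f_n$ for all $n\ge 0$.

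The main obstacle is the converse direction of Theorem \ref{partitofM}: showing that every block choice is admissible and that the decomposition is unique, so that the recurrence counts exactly the elements of $\Delta_m^2$. I expect the ordering check at the boundary between $B$ and $C$ to be the delicate step. I would handle it by comparing the first position where the two rows (or columns) differ, which always lies in the column range (or row range) of $B$.
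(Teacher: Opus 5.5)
Your proof is correct, but it reaches the two-term recurrence by a different route from the paper's.

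\textbf{Your route.} You classify $A\in\Delta_m^2$ by the size $p\in\{2,\dots,m\}$ of its leading block. This gives the full-history recurrence $d_m=\sum_{p=2}^m d_{m-p}$ with $d_0=1$, $d_1=0$, and you then telescope it to $d_m=d_{m-1}+d_{m-2}$.

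\textbf{The paper's route.} The paper splits $\Delta_{n+2}^2$ into only two classes:
\begin{itemize}
\item $\mathcal{M}_1$: the leading block is the all-ones block (\ref{B-zero});
\item $\mathcal{M}_2$: the leading block is a chain (\ref{twostarts2}) of size $\ge 3$.
\end{itemize}
It then gives direct matrix bijections. The map $\mathcal{M}_1\to\Delta_n^2$ deletes the first two rows and columns. The map $\mathcal{M}_2\to\Delta_{n+1}^2$ changes $a_{22}$ from $0$ to $1$ and deletes the first row and column. This shortens the leading chain by one; in particular, a $3$-chain becomes the all-ones $2\times 2$ block. It is the matrix form of your composition bijection, acting on the first part instead of the last.

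\textbf{What each route buys.} The paper obtains $\delta(n+2,2)=\delta(n,2)+\delta(n+1,2)$ in one step. It only needs the converse for two specific constructions: prepending the all-ones block, and lengthening the first chain. Your route needs the converse for every block size $p$, including the empty case $p=m-1$. You check this boundary ordering more carefully than the paper, which only says ``it is easy to see''. In return, you identify $\delta(m,2)$ explicitly with compositions of $m$ into parts $\ge 2$ (Corollary~\ref{remrkbss}). Your conventions $d_0=1$, $d_1=0$ also make the base cases automatic, since $d_2=d_0$ and $d_3=d_1+d_0$, so you do not need Example~\ref{ExampleEx3.1}.
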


\begin{proof}
For $n=0,\ 1,\ 2$ and $3$, see Example \ref{ExampleEx3.1}.

Let $n$ be an integer, $n\ge 2$ and let $A= \left( a_{ij} \right)\in\Delta_{n+2}^2$. From Theorem \ref{partitofM} it follows that $A$ is presented in the form (\ref{onestarA}) and the set  $\Delta_{n+2}^2$ is a partition into two disjoint subsets $\mathcal{M}_1$ and $\mathcal{M}_2$ such that the set $\mathcal{M}_1$ consists of matrices $A$ whose upper left corner is a submatrix $B$ of the type (\ref{B-zero}) and the set $\mathcal{M}_2$ consists of matrices $A$ whose upper left corner is a submatrix $B$ of the type (\ref{twostarts2}).
$$\mathcal{M}_1 \cap \mathcal{M}_2 =\emptyset ,\qquad \mathcal{M}_1 \cup \mathcal{M}_2 =\Delta_{n+2}^2 . $$

Therefore 
\begin{equation}\label{partitionM1andM1}
\left| \Delta_{n+2}^2  \right| = \left| \mathcal{M}_1 \right| + \left| \mathcal{M}_2 \right| .
\end{equation}

i) Let $A\in \mathcal{M}_1$. In $A$, we remove the first and second rows and the first and second columns. We obtain the matrix $C\in \Lambda_n^{2}$. From Theorem \ref{partitofM} it follows that  $C\in\Delta_n^2$. 

Conversely, let $C =\left( c_{i\, j} \right) \in \Delta_n^{2}$, $n\ge 2$. From $C$ we obtain the matrix $A=\left( a_{i\, j} \right) \in\Lambda_{n+2}^2$ as follows: $a_{1\, 1} =a_{1\, 2} =a_{2\, 1}=a_{2\, 2} =1$, $a_{1\, j} =a_{2\, j} =0$ for $3\le j\le n+2$ and $a_{i\, 1} =a_{i\, 2} =0$ for $3\le i\le n+2$. For each  $i,j\in \{3,4,\ldots ,n+2 \}$ we assume $a_{i\, j} =c_{i-2\, j-2}$. It is easy to see that the so obtained matrix $A\in \Delta_{n+2}^2$.

Therefore, 
\begin{equation}\label{M1=mu(n+2,n)}
\left| \mathcal{M}_1 \right| =\delta (n,2)
\end{equation}
for any integer $n\ge 2$.

ii) Let $A\in \mathcal{M}_2$, i.e. $A\in \Delta_{n+2}^2$ is of the form 
$$A= \left(
\begin{array}{cccccc}
1 & 1 & 0 & 0 & \cdots & 0 \\
1 & 0 & 1 & 0 & \cdots & 0 \\
0 & 1 &   &   &    &   \\
0 & 0 &   &   &    &   \\
\vdots & \vdots &   &  &  &   \\
0 & 0 &   &   &  &
\end{array}
\right) .
$$ 
We change  $a_{2\, 2}$ from 0 to 1 and remove the first row and the first column of $A$. In this way we obtain a matrix, which is easy to see that it belongs to the set $\Delta_{n+1}^2$.

Conversely, let $D=\left( d_{i\, j} \right) \in \Delta_{n+1}^{2}$. According to Proposition \ref{Proposition1} $d_{1\, 1} =d_{1\, 2} =d_{2\, 1} =1$. We change  $d_{1\, 1}$  from 1 to 0. In $D$, we add a first row and a first column and get the matrix $A=\left( a_{i\, j} \right) \in \Lambda_{n+2}^2$, such that $a_{1\, 1} =a_{1\, 2} =a_{2\, 1} =1$, $a_{1\, j} =0$ for $j=3,4,\ldots ,n+2$, $\alpha_{i\, 1} =0$ for $i=3,4,\ldots ,n+2$ and $a_{s+1\, t+1} =d_{s\, t}$ for $s,t\in \{ 1,2,\ldots ,n+1\}$. It is easy to see that the resulting matrix  $A\in \mathcal{M}_2 \subset \Delta_{n+2}^2$.

Therefore,  
\begin{equation}\label{M2=mu(n+1,n-1)}
\left| \mathcal{M}_2 \right| =\delta (n+1,2)
\end{equation}
for every integer $n\ge 2$.

From (\ref{partitionM1andM1}), (\ref{M1=mu(n+2,n)}) and (\ref{M2=mu(n+1,n-1)}) it follows that when $n\ge 2$
$$\delta (n+2,2) =\left| \Delta_{n+2}^2 \right| = \left| \mathcal{M}_1 \right| + \left| \mathcal{M}_2 \right| =\delta (n,2) + \delta (n+1,2)$$

This completes the proof.

\end{proof}

\begin{corollary} \cite{KYFib} From Theorem \ref{thtiorDelta_n_2} and Theorem \ref{ttrgamma_njk} it follows:
\begin{equation}\label{Fibgamma}
f_n =\gamma (n+2,n) ,
\end{equation}
where $f_n$ is the $n$-th element of the Fibonacci sequence.
\end{corollary}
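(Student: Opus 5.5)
The corollary should follow by combining the two cited theorems directly. Fix a nonnegative integer $n$. By Theorem \ref{thtiorDelta_n_2}, $f_n=\delta(n+2,2)$. I then apply Theorem \ref{ttrgamma_njk} with $n+2$ in place of $n$ and $k=2$. The hypotheses $n+2\ge 1$ and $0\le 2\le n+2$ hold for every $n\ge 0$, so the theorem gives
$$\gamma(n+2,(n+2)-2)=\delta(n+2,2),$$
that is, $\gamma(n+2,n)=\delta(n+2,2)$. Chaining the two equalities yields $f_n=\gamma(n+2,n)$.

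The only point that needs care is that the complementation bijection $A\mapsto\overline{A}$ from the proof of Theorem \ref{ttrgamma_njk} is valid for the parameters used here. Complementing every entry of a $\Lambda_{n+2}^{n}$-matrix gives a matrix with exactly $2$ units in each row and column, and the reverse is also true. Complementation also reverses the lexicographic order on rows and on columns. Therefore it maps $\Gamma_{n+2}^{n}$ bijectively onto $\Delta_{n+2}^{2}$, which is exactly what the argument uses.

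I do not expect a real obstacle in this step; all the work lies in the recursion of Theorem \ref{thtiorDelta_n_2}. As a sanity check, I would verify the small cases against Example \ref{ExampleEx3.1} together with complementation. For instance, $\gamma(2,0)$ counts only the zero $2\times 2$ matrix, which is the complement of the unique $\Delta_2^2$-matrix, so $\gamma(2,0)=1=f_0$. Likewise $\gamma(3,1)=1=f_1$, given by the complement of the unique $\Delta_3^2$-matrix.
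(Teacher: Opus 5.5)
Your proposal is correct and matches the paper's argument: the corollary is obtained by applying Theorem \ref{ttrgamma_njk} with $n+2$ in place of $n$ and $k=2$, which gives $\gamma(n+2,n)=\delta(n+2,2)$, and then chaining this with $f_n=\delta(n+2,2)$ from Theorem \ref{thtiorDelta_n_2}. Your re-check of the complementation bijection and of the small cases $\gamma(2,0)=\gamma(3,1)=1$ is accurate, but it only re-verifies what the cited theorem already provides.
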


\begin{remark}
Equations (\ref{Fobdelta}) and (\ref{Fibgamma}) are significantly different from each other. Thus, the result obtained in this paper differs from the result defined and proven in \cite{KYFib} concerning the same problem.
\end{remark}

\makeatletter
\renewcommand{\@biblabel}[1]{[#1]\hfill}
\makeatother

\end{document}